\documentclass[11pt]{amsart}
\usepackage{amsmath,amssymb,amsthm}
\usepackage{hyperref}

\usepackage{mathrsfs}

\usepackage{enumerate}

\usepackage{tikz}
\usetikzlibrary{arrows.meta, positioning, calc}

\newtheorem{theorem}{Theorem}[section]
\newtheorem{lemma}[theorem]{Lemma}
\newtheorem{proposition}[theorem]{Proposition}

\newtheorem{definition}[theorem]{Definition}
\newtheorem{example}[theorem]{Example}
\newtheorem{remark}[theorem]{Remark}

\def\opec{\mbox{\scriptsize\sf C}}
\def\opecl{\mbox{\scriptsize\sf CL}}
\def\opd{\mbox{\scriptsize\sf D}}
\def\ope{\mbox{\scriptsize\sf E}}

\def\opn{\mbox{\scriptsize\sf N}}

\def\opq{\mbox{\scriptsize\sf Q}} 
\def\opr{\mbox{\scriptsize\sf R}} 
 
\def\opid{\mbox{\scriptsize\sf Id}} 
\def\ops{\mathsf{s}} 

\newcommand{\classA}{\mathfrak{A}}

\newcommand{\classS}{\mathfrak{S}}
\newcommand{\classU}{\mathfrak{U}} 
\newcommand{\classE}{\mathscr{E}}
\newcommand{\classG}{\mathscr{G}}

\newcommand{\powerset}{\mathcal{P}} 

\title{The Category of Formations of Finite Groups and Topology}

\author{Ismael Gutierrez Garcia*}
\address{* Corresponding Author, Department of Mathematics and Statistics, Universidad del Norte, Km 5 via a Puerto Colombia, Barranquilla - Colombia}

\email{isgutier@uninorte.edu.co}

\author{Luz Adriana Mejía Castaño}
\address{Department of Mathematics and Statistics, Universidad del Norte, Km 5 via a Puerto Colombia, Barranquilla - Colombia}
\email{mejiala@uninorte.edu.co}
\thanks{ }

\subjclass[2010]{Primary 20D10, 20D35; Secondary 18C35, 18D10, 55U10}

\date{ }

\keywords{Formations of finite groups, Closure operators, Monoidal categories, Alexandrov topology, Homotopy of posets}

\begin{document}


\begin{abstract}
This paper explores the interplay between category theory, topology, and the algebraic theory of finite groups. Our analysis unfolds in three stages. First, we establish the foundational universe of our objects: the complete and cocomplete posetal category of group classes, $\mathrm{CG}$. Second, we formalize the collection of closure operators themselves as a category, \textbf{CL}, proving it is a complete lattice. This provides the essential machinery for combining algebraic operations and understanding their universal properties via adjunctions. Finally, we apply this framework to topology. We show that additive universally anchored operators induce homotopically equivalent contractible spaces, revealing a principle of global simplicity that contrasts with local algebraic friction. We then use the lattice structure of \textbf{CL} to analyze the operators for Formations and Fitting classes, uncovering a profound topological asymmetry between these dually defined structures.
\end{abstract}

\maketitle

\section{Introduction}
The theory of formations provides a natural framework for analyzing classes of finite groups, especially in studying soluble groups. Formations capture closure under quotients and subdirect products, and their local definitions connect to deep structural results such as the Gaschütz-Lubeseder-Schmid theorem. Many of these constructions exhibit a functorial or structural character, suggesting that categorical methods can offer a deeper understanding of their behavior (see the classical references \cite{MacLane,Borceux})

Within this framework, closure operators play a central role. Operators such as $\ops$ (subgroups), $\opq$ (quotients), and $\ops_n$ (subnormal subgroups) reveal deep structural patterns. By Kuratowski's classical theorem, additive operators induce Alexandrov topologies, providing a way to study the global shape of the universe of groups. This raises a central question: does this shape depend on the algebraic notion---subgroup, quotient, etc.---used to define it?

Our first main result answers this in the negative, revealing a principle of \textit{global topological invariance}. We show that the topologies induced by $\ops$, $\ops_n$, and $\opq$ are all path-connected, contractible, and thus homotopically equivalent. This striking fact suggests that despite significant \textit{local algebraic friction} (such as the non-commutativity of $\ops$ and $\opq$), the universe of groups is structurally robust enough to resolve these tensions into a uniform global shape.

The power of this topological framework is best illustrated when applied as a diagnostic tool to two of the most important, dually-defined structures in group theory. Our analysis reveals a profound \textit{topological asymmetry} between them. We show that the structure of \textbf{Formations} is topologically simple, a consequence of a remarkable structural regularization where the interaction of its defining operators preserves additivity. In stark contrast, the structure of their dual, \textbf{Fitting classes}, proves to be topologically opaque to this method, a direct result of the non-additivity of its synthesis component, the operator $\opn_0$.

To rigorously establish these results, our approach unfolds in three logical stages. We first establish the categorical framework for group classes ($\mathrm{CG}$). We then formalize the collection of closure operators as a complete lattice (\textbf{CL}), providing the machinery to combine operators. Finally, we execute our main topological analysis, culminating in the discovery of this fundamental asymmetry. By combining these perspectives, this paper offers not only conceptual clarity but a new classification principle for algebraic structures based on their global topological properties.

\section{Preliminaries}
In this section, we collect the main notions from category theory, classes of groups, formations, and closure operators that will be used throughout the paper. Standard definitions (categories, functors, natural transformations) are omitted or only briefly recalled, as the intended readership is already familiar with them. Instead, we emphasize the specific categorical and algebraic concepts that are central to our framework. 

\textbf{Categories.} A \textit{posetal category} is a category in which every hom-set contains at most one morphism. Such a structure corresponds to a pre-ordered set, where $X \leq Y$ if a morphism $X \to Y$ exists. In our context, where objects are isomorphism classes of groups, this pre-order is in fact a partial order. The existence of morphisms in both directions, $X \to Y$ and $Y \to X$, implies $X \subseteq Y$ and $Y \subseteq X$, which in turn implies $X = Y$. This satisfies anti-symmetry, ensuring the underlying structure is a poset. Limits in a posetal category correspond to infima, and colimits to suprema.

A \emph{monoidal category} $(\mathcal{C},\otimes, I)$ is a category $\mathcal{C}$ equipped with a bifunctor $\otimes : \mathcal{C} \times \mathcal{C} \to \mathcal{C}$, a distinguished object $I$, and natural isomorphisms (associator, left and right unitors) satisfying the standard coherence conditions. We will apply these structures in Section 3 when studying the category of formations.

Standard definitions (categories, functors, natural transformations) are omitted or only briefly recalled, as the intended readership is already familiar with them \cite{MacLane,Borceux}.

\textbf{Classes of groups.} A class of groups $\mathfrak{X}$ is a collection of groups closed under isomorphism. In line with standard conventions in the theory of formations, we will assume throughout this paper that any non-empty class of groups contains the trivial group $\{1\}$. Examples include:
\begin{enumerate}
    \item $\classA$: the class of all abelian groups.
    \item $\classS$: the class of all soluble groups.
    \item $\classU$: the class of all supersoluble groups.
    \item $\classE$: the class of all finite groups.
    \item $\classG$: the class of all groups.
\end{enumerate}

\textbf{Closure operations.}
A map $\opec: \powerset(\classG) \to \powerset(\classG)$ that sends a class of groups $\mathfrak{X}$ to a class of groups $\opec\mathfrak{X}$ is an \textbf{operation} if it satisfies:
\begin{enumerate}
    \item $\opec\emptyset = \emptyset$.
    \item $\mathfrak{X} \subseteq\opec\mathfrak{X}$ (\textbf{extensivity}).
    \item If $\mathfrak{X} \subseteq \mathfrak{Y}$, then $\opec\mathfrak{X} \subseteq\opec\mathfrak{Y}$ (\textbf{monotonicity}).
\end{enumerate}
An operation $\opec$ is a \textbf{closure operator} if it is also:
\begin{enumerate}
    \item[(4)] $\opec(\opec\mathfrak{X}) =\opec\mathfrak{X}$ for all classes $\mathfrak{X}$ (\textbf{idempotency}).
\end{enumerate}
A class $\mathfrak{X}$ is \textbf{$\opec$-closed} if $\opec\mathfrak{X} = \mathfrak{X}$.
Closure operators are classical tools in both topology and algebra. Their study from a categorical point of view goes back to works such as \cite{Castellini,DikranjanTholen2010}, and more recent developments include closure operators in posetal categories \cite{AbdallaJanelidze2018}.

Standard closure operators in group theory include:
\begin{align*}
\opid\mathfrak{X} & = \mathfrak{X} \\ 
\ops \mathfrak{X} & = (G\mid G\leq H, \ \text{for some} \ H \in \mathfrak{X})\\
\opq \mathfrak{X} & =  (G\mid \exists H \in \mathfrak{X} \ \text{and} \ \exists \ \varphi : H \longrightarrow G \ \text{an epimorphism})\\ 
\opr_0 \mathfrak{X} & = (G\mid \exists\ N_1, \ldots, N_r\unlhd G\ \mbox{with}\ G/N_j \in \mathfrak	X\ \mbox{and}\ \bigcap\nolimits_{j=1}^r N_j = 1)\\
\opn_0 \mathfrak{X} & =  (G\mid \exists\ K_1, \ldots, K_r \unlhd\unlhd G\ \mbox{with}\ K_i \in \mathfrak{X}\ \mbox{and}\ G = \langle K_i, \ldots, K_r\rangle )\\ 
\ops_n \mathfrak{X} & = (G\mid G \unlhd\unlhd K \ \text{for some} \ K\in \mathfrak{X})\\ 
\ope_\Phi \mathfrak{X}  & =  (G\mid \exists\ N \unlhd G\ \mbox{with}\ N \leq \Phi(G)\ \mbox{and}\ G/N \in \mathfrak{X})\\
\opd_0 \mathfrak{X} & =  (G\mid G= \prod\nolimits_{j=1}^r H_j \ \mbox{with each}\ H_j\in \mathfrak{X})\\ 
\ope_p \mathfrak{X} & =  (G\mid \exists K\unlhd G \ \mbox{with}\ K\leq O_p(G) \ \mbox{such that}\ G/K\in \mathfrak{X}).
\end{align*}

For example, If $\mathfrak{X} = \{C_2\}$, then:
\begin{enumerate}
    \item $\ops\mathfrak{X}$ contains all subgroups of $C_2$,  namely $C_2$ and $1$.
    \item $\opq\mathfrak{X}$ contains all quotients of $C_2$,  again $C_2$ and $1$.
    \item $\ops_n\mathfrak{X} = s\mathfrak{X}$,  since all subgroups of $C_2$ are normal.
\end{enumerate}

\textbf{Formations of groups.}
A non-empty class $\mathfrak{F}$ of finite groups is called a \emph{formation} if:
\begin{enumerate}
    \item If $G \in \mathfrak{F}$ and $N \trianglelefteq G$,  then $G/N \in \mathfrak{F}$ ($\mathfrak{F}$ is $\opq$-closed or  it is closed under quotients).
    \item If $N_1, N_2 \trianglelefteq G$ with $N_1 \cap N_2 = 1$ and $G/N_i \in \mathfrak{F}$ for $i=1,2$, then $G \in \mathfrak{F}$ ($\mathfrak{F}$ is $\opr_0$-closed or it is closed under subdirect products).
\end{enumerate}

Typical examples are abelian, nilpotent, and soluble groups, as well as $p$-groups or soluble groups of restricted derived length.

A formation $\mathfrak{F}$ is \emph{locally defined} if there exists a function assigning to each prime $p$ a formation $f(p)$ such that $G \in \mathfrak{F}$ iff for every chief factor $H/K$ of $G$ which is a $p$-group, one has $G/C_G(H/K) \in f(p)$. The Gaschütz–Lubeseder–Schmid theorem asserts that a formation is saturated, that is,  if $G/\Phi(G) \in \mathfrak{F}$ then $G \in \mathfrak{F}$,  if and only if it is locally defined.  See \cite[Chapter IV]{DoerkHawkes90}.

\textbf{Kuratowski closure operator and topology.} Kuratowski observed that additive closure operators correspond exactly to closure operators arising from topologies. In our context, additive operators such as $\ops$, $\opq$, and $\ops_n$ induce Alexandrov topologies on the universe of groups. We return to these operators in Section~\ref{sec:topologies}, where they play a central role.

\section{The category of group classes and the monoidal category of formations}

In this section, we establish the categorical framework that underpins our topological analysis. By formalizing the universe of group classes as a category, $\mathrm{CG}$, we gain access to the precise language of limits, colimits, and universal properties, which will be essential for interpreting our results. We then focus on the subcategory of formations, \textbf{FOR}, and demonstrate that it possesses a rich algebraic structure as a monoidal category.

\begin{definition}
The \emph{category of group classes}, denoted $\mathrm{CG}$, is is the posetal category whose objects are all classes of groups $\mathfrak{X}$ and where a unique morphism $\mathfrak{X} \to \mathfrak{Y}$ exists if and only if $\mathfrak{X} \subseteq \mathfrak{Y}$.
\end{definition}

\noindent
The categorical structure of $\mathrm{CG}$ directly reflects the well-known properties of the underlying complete lattice of group classes. We summarize them here as they form the foundational setting for our analysis.

\begin{remark}
As a posetal category whose order is given by class inclusion, $\mathrm{CG}$ is both \textbf{complete and cocomplete}. Arbitrary intersections serve as limits (infima) and arbitrary unions serve as colimits (suprema). The class of all groups, $\mathfrak{G}$, is the terminal object. Following the convention established in Section 2, the trivial class $\{1\}$ is the initial object among non-empty classes. Finally, when equipped with intersection as its product, $\mathrm{CG}$ becomes a cartesian monoidal category with unit object $\mathfrak{G}$.
\end{remark}

Besides intersection, one may also define a product of classes, motivated by extension theory.

\begin{definition}
Let $\mathfrak{X}$ and $\mathfrak{Y}$ be classes of groups, we define a \emph{product} as follows:
\[\mathfrak{X}\mathfrak{Y} = \{\, G \in \classG \mid \exists N \trianglelefteq G \text{ with } N \in \mathfrak{X} \text{ and } G/N \in \mathfrak{Y}\}.\]
\end{definition}

Groups in $\mathfrak{X}\mathfrak{Y}$ are called \emph{$\mathfrak{X}$-by-$\mathfrak{Y}$ groups}. For example, if $\mathfrak{X}$ is the class of abelian groups and $\mathfrak{Y}$ the class of nilpotent groups, then $\mathfrak{X}\mathfrak{Y}$ consists of extensions of nilpotent groups by abelian normal subgroups. 

This product is not associative in general, although associativity can hold under additional closure conditions (e.g.\ if $\mathfrak{X}$ is $N_0$-closed and $\mathfrak{Y}$ is $Q$-closed). This product, while natural, does not generally preserve the property of being a formation. To uncover richer algebraic structure, we must restrict our attention to the subcategory of formations and introduce a more sophisticated product, due to Gaschütz.

\textbf{The category $\mathrm{FOR}$ of formations.} We now restrict attention to formations of finite groups.

\begin{definition}
The category $\mathrm{FOR}$ is the full subcategory of $\mathrm{CG}$ whose objects are formations.
\end{definition}

The simple product $\mathfrak{X}\mathfrak{Y}$ does not, in general, preserve the property of being a formation. W. Gaschütz introduced a modified product that does.

\begin{lemma}
Let $\mathfrak{X}$ be an $\opr_0$-closed class and $G$ a finite group.  
Then the set
\[\mathscr{S} = \{N \unlhd G \mid G/N \in \mathfrak{X} \,\},\]
partially ordered by inclusion, has a unique minimal element, denoted by 
$G^{\mathfrak{X}}$ and called the \emph{$\mathfrak{X}$-residual} of $G$.  
It is a characteristic subgroup of $G$, and if $\mathfrak{X}$ is a formation 
and $\varepsilon: G \twoheadrightarrow \varepsilon(G)$ is an epimorphism, then $\varepsilon(G)^{\mathfrak{X}} = \varepsilon (G^{\mathfrak{X}})$.
\end{lemma}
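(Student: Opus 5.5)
Since $G$ is finite, $\mathscr{S}$ is finite. Assuming $\mathfrak{X}$ non-empty, it contains the trivial group by the convention of Section 2, so $G \in \mathscr{S}$ and $\mathscr{S}$ is non-empty. The plan is to show that $\mathscr{S}$ is closed under finite intersections. Then the intersection $G^{\mathfrak{X}}$ of all members of $\mathscr{S}$ lies in $\mathscr{S}$ and is the unique minimal element. For closure, take $N_1,\dots,N_r\in\mathscr{S}$ and set $D=\bigcap_j N_j$. In $G/D$ the normal subgroups $N_j/D$ intersect trivially, and $(G/D)/(N_j/D)\cong G/N_j\in\mathfrak{X}$. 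The definition of $\opr_0$ then gives $G/D\in\opr_0\mathfrak{X}=\mathfrak{X}$, so $D\in\mathscr{S}$. Uniqueness is immediate: any minimal element $M$ satisfies $G^{\mathfrak{X}}\le M$, so $M=G^{\mathfrak{X}}$.

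For characteristicity, let $\alpha\in\operatorname{Aut}(G)$. For $N\in\mathscr{S}$ we have $\alpha(N)\unlhd G$, and $\alpha$ induces an isomorphism $G/N\cong G/\alpha(N)$. Classes are closed under isomorphism, so $\alpha(N)\in\mathscr{S}$. Thus $\alpha$ permutes $\mathscr{S}$ and fixes its intersection, i.e. $\alpha(G^{\mathfrak{X}})=G^{\mathfrak{X}}$.

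For the epimorphism statement, assume $\mathfrak{X}$ is a formation and write $K=\ker\varepsilon$, identifying $\varepsilon(G)$ with $G/K$. First, $\varepsilon(G)/\varepsilon(G^{\mathfrak{X}})\cong G/G^{\mathfrak{X}}K$. This is a quotient of $G/G^{\mathfrak{X}}\in\mathfrak{X}$, hence lies in $\mathfrak{X}$ by $\opq$-closure, so $\varepsilon(G)^{\mathfrak{X}}\le\varepsilon(G^{\mathfrak{X}})$. Conversely, $\varepsilon(G)^{\mathfrak{X}}=L/K$ for a unique $L\unlhd G$ with $K\le L$. Then $G/L\cong \varepsilon(G)/\varepsilon(G)^{\mathfrak{X}}\in\mathfrak{X}$, so $L\in\mathscr{S}$ and $G^{\mathfrak{X}}\le L$. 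Applying $\varepsilon$ gives $\varepsilon(G^{\mathfrak{X}})\le\varepsilon(L)=\varepsilon(G)^{\mathfrak{X}}$, and equality follows.

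None of these steps is a serious obstacle; the argument uses only the correspondence theorem and the isomorphism theorems. The one point needing care is the first step, where the $\opr_0$ condition must be applied inside the quotient $G/D$, not in $G$ itself. This is also where finiteness of $G$ enters, since it ensures only finitely many intersections are needed.
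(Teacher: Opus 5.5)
Your proof is correct and is the standard argument from Doerk--Hawkes (Chapter II, Lemma 2.4), which the paper cites as its entire proof. In that argument $\opr_0$-closure, applied in $G/D$, makes $\mathscr{S}$ closed under finite intersections; automorphisms permute $\mathscr{S}$; and the epimorphism formula follows from $\opq$-closure and the correspondence theorem, exactly as you wrote.

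Your appeal to the paper's convention that a non-empty class contains $1$ (so that $G\in\mathscr{S}$) is the hypothesis actually needed. For the empty class, which is $\opr_0$-closed, $\mathscr{S}$ is empty and the statement fails.
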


\begin{proof}
\cite[Chapter II, Lemma 2.4]{DoerkHawkes90}
\end{proof}

\begin{definition}[]
Let $\mathfrak{G}$ be a class of groups and $\mathfrak{F}$ a formation. The Gaschütz product of $\mathfrak{G}$, with  $\mathfrak{F}$ is denoted and defined by
\begin{equation}
\mathfrak{G} \circ \mathfrak{F} = \{G\in \classE \mid G^{\mathfrak{F}} \in \mathfrak{G}\}.	
\end{equation}
\end{definition}

\begin{proposition}\label{lem:properties_formation_product}
Let $\mathfrak{F}$ and $\mathfrak{G}$ be formations. Then the Gaschütz product satisfies:
\begin{enumerate}
\item $\mathfrak{G\circ F}  \subseteq \mathfrak{GF}$, $\mathfrak{F} \subseteq \mathfrak{G\circ F}$ whenever $1\in \mathfrak{G}$ and $\mathfrak{G\circ F}  =\mathfrak{GF}$ if $\mathfrak{G}$ is subnormal-closed.
\item $\mathfrak{G\circ F}$ is a formation
\item $G^{\mathfrak{G\circ F}} = (G^{\mathfrak{F}})^{\mathfrak{G}}$ for all $G\in \classE$
\item The operation $\circ$ is associative and preserves formations.
    \item The trivial formation $\{1\}$ is a two-sided unit.
\end{enumerate}
\end{proposition}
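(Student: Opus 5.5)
Throughout I use the residual lemma above: for an $\opr_0$-closed class $\mathfrak{X}$ and finite $G$, $G^{\mathfrak{X}}$ is the smallest normal subgroup with quotient in $\mathfrak{X}$, it is characteristic, and for a formation it commutes with epimorphisms. I prove the items in the order (1), (3), (2), (4), (5), since (2) and associativity follow most cleanly from the residual formula (3).

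\textbf{Item (1).} If $G\in\mathfrak{G}\circ\mathfrak{F}$, take $N=G^{\mathfrak{F}}$. Then $N\unlhd G$, $N\in\mathfrak{G}$ and $G/N\in\mathfrak{F}$, so $G\in\mathfrak{G}\mathfrak{F}$. If $G\in\mathfrak{F}$, then $G^{\mathfrak{F}}=1\in\mathfrak{G}$, so $\mathfrak{F}\subseteq\mathfrak{G}\circ\mathfrak{F}$. Now suppose $\mathfrak{G}$ is $\ops_n$-closed and $G\in\mathfrak{G}\mathfrak{F}$, witnessed by $N$. Minimality gives $G^{\mathfrak{F}}\le N$, and since $G^{\mathfrak{F}}$ is characteristic in $G$ it is normal in $N$. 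Subnormal closure then gives $G^{\mathfrak{F}}\in\mathfrak{G}$, which proves the reverse inclusion.

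\textbf{Item (3).} Put $M=(G^{\mathfrak{F}})^{\mathfrak{G}}$. It is characteristic in $G^{\mathfrak{F}}$, hence normal in $G$.
\begin{itemize}
\item[(a)] First show $G/M\in\mathfrak{G}\circ\mathfrak{F}$. By the epimorphism property applied to $G\to G/M$, we get $(G/M)^{\mathfrak{F}}=G^{\mathfrak{F}}/M$. The latter lies in $\mathfrak{G}$ by definition of $M$.
\item[(b)] Next show that any $K\unlhd G$ with $G/K\in\mathfrak{G}\circ\mathfrak{F}$ contains $M$. Applying the epimorphism property to $G\to G/K$ gives $(G/K)^{\mathfrak{F}}=G^{\mathfrak{F}}K/K\cong G^{\mathfrak{F}}/(G^{\mathfrak{F}}\cap K)$, and this is in $\mathfrak{G}$. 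Hence $G^{\mathfrak{F}}\cap K$ is a normal subgroup of $G^{\mathfrak{F}}$ with quotient in $\mathfrak{G}$, so it contains $M$ by minimality; thus $M\le K$.
\end{itemize}
This identifies $M$ as the least normal subgroup with quotient in $\mathfrak{G}\circ\mathfrak{F}$, provided we know that set has a least element. That comes from item (2), or can be checked directly as in (b).

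\textbf{Item (2).} This is the step needing most care.
\begin{itemize}
\item[(a)] $\opq$-closure: if $G\in\mathfrak{G}\circ\mathfrak{F}$ and $N\unlhd G$, then $(G/N)^{\mathfrak{F}}=G^{\mathfrak{F}}N/N$ is a quotient of $G^{\mathfrak{F}}\in\mathfrak{G}$, so it lies in $\mathfrak{G}$ because $\mathfrak{G}$ is $\opq$-closed.
\item[(b)] $\opr_0$-closure: let $N_1\cap N_2=1$ with $G/N_i\in\mathfrak{G}\circ\mathfrak{F}$. Then $G^{\mathfrak{F}}N_i/N_i\in\mathfrak{G}$, so $G^{\mathfrak{F}}/(G^{\mathfrak{F}}\cap N_i)\in\mathfrak{G}$. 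The subgroups $G^{\mathfrak{F}}\cap N_i$ are normal in $G^{\mathfrak{F}}$ and intersect trivially, so $\opr_0$-closure of $\mathfrak{G}$ gives $G^{\mathfrak{F}}\in\mathfrak{G}$.
\item[(c)] Non-emptiness holds since $1\in\mathfrak{G}\circ\mathfrak{F}$.
\end{itemize}
The main subtlety is making sure the epimorphism formula $(G/N)^{\mathfrak{F}}=G^{\mathfrak{F}}N/N$ is applied correctly.

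\textbf{Items (4) and (5).} For associativity, let $\mathfrak{H}$ be a further class, so $\mathfrak{H}\circ\mathfrak{G}$ makes sense. Then $G\in(\mathfrak{H}\circ\mathfrak{G})\circ\mathfrak{F}$ iff $G^{\mathfrak{F}}\in\mathfrak{H}\circ\mathfrak{G}$ iff $(G^{\mathfrak{F}})^{\mathfrak{G}}\in\mathfrak{H}$. By (3) the last condition reads $G^{\mathfrak{G}\circ\mathfrak{F}}\in\mathfrak{H}$, which is exactly $G\in\mathfrak{H}\circ(\mathfrak{G}\circ\mathfrak{F})$; preservation of formations is (2). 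For the unit:
\begin{itemize}
\item $\{1\}\circ\mathfrak{F}=\{G\mid G^{\mathfrak{F}}=1\}=\mathfrak{F}$;
\item $\mathfrak{G}\circ\{1\}=\{G\mid G^{\{1\}}\in\mathfrak{G}\}=\mathfrak{G}$, since $G^{\{1\}}=G$. Here $\{1\}$ is a formation, so the residual is defined.
\end{itemize}
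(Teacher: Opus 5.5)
Your proof is correct and complete; the paper gives no argument of its own beyond citing Doerk--Hawkes, Chapter IV, Theorem 1.8, and your proof is essentially the standard one for that result: the formation property via $(G/N)^{\mathfrak{F}}=G^{\mathfrak{F}}N/N$, then the residual identity $G^{\mathfrak{G}\circ\mathfrak{F}}=(G^{\mathfrak{F}})^{\mathfrak{G}}$, then associativity deduced from it. Your proviso in item (3) is unnecessary, since parts (a) and (b) already show that $M$ lies in the set and is contained in every member of it, so $M$ is its least element without appealing to item (2).
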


\begin{proof}
\cite[Chapter IV, Theorem 1.8]{DoerkHawkes90}
\end{proof}

\begin{theorem}
The category $\mathrm{FOR}$, equipped with the Gaschütz product $\circ$ as tensor product and the trivial formation $\{1\}$ as unit, is a monoidal category.
\end{theorem}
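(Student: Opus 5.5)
We will exploit the fact that $\mathrm{FOR}$ is a posetal category. In a posetal category all diagrams commute and any two parallel morphisms coincide. So a monoidal structure amounts to a monotone binary operation on objects, together with the existence of associator and unitor isomorphisms. In a poset, isomorphisms are identities, so the associator and unitors must be equalities of objects. The plan is therefore to check three things: that $\circ$ is well defined on objects and functorial (monotone in each variable), that it is strictly associative, and that $\{1\}$ is a strict two-sided unit. The pentagon and triangle coherence axioms are then automatic, since every diagram in a poset commutes.

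First, objects. By Proposition~\ref{lem:properties_formation_product}(2), $\mathfrak{G}\circ\mathfrak{F}$ is a formation whenever $\mathfrak{F},\mathfrak{G}$ are, so $\circ$ restricts to a map $\mathrm{Ob}(\mathrm{FOR})\times\mathrm{Ob}(\mathrm{FOR})\to\mathrm{Ob}(\mathrm{FOR})$.

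Next, functoriality. Since $\mathrm{FOR}\times\mathrm{FOR}$ is also posetal, it suffices to show monotonicity: if $\mathfrak{G}\subseteq\mathfrak{G}'$ and $\mathfrak{F}\subseteq\mathfrak{F}'$, then $\mathfrak{G}\circ\mathfrak{F}\subseteq\mathfrak{G}'\circ\mathfrak{F}'$. Identities and composites are then preserved trivially.
\begin{itemize}
\item Monotonicity in the left variable is immediate from the definition: $G^{\mathfrak{F}}\in\mathfrak{G}\subseteq\mathfrak{G}'$.
\item Monotonicity in the right variable is the main obstacle, because $G^{\mathfrak{F}'}$ is a different subgroup from $G^{\mathfrak{F}}$.
\end{itemize}
For the right variable, note that $G^{\mathfrak{F}'}\le G^{\mathfrak{F}}$, since $G/G^{\mathfrak{F}}\in\mathfrak{F}\subseteq\mathfrak{F}'$ and $G^{\mathfrak{F}'}$ is minimal. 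Membership of $G^{\mathfrak{F}}$ in $\mathfrak{G}'$ does not in general pass to the subgroup $G^{\mathfrak{F}'}$. The approach is instead to use part (3) with the group $G^{\mathfrak{F}}$ and the formation $\mathfrak{G}'$, applying the lemma on residuals to reduce to $G^{\mathfrak{F}'}$ being a normal subgroup of $G^{\mathfrak{F}}$. If this step fails for arbitrary formations, one needs the extra hypothesis that the formations involved are subnormal-closed (or $\mathrm{s}_n$-closed). Failing that, the fallback is to equip $\mathrm{FOR}$ with only the morphisms for which the tensor is functorial, namely equalities. In that case the functoriality check is vacuous, and the theorem becomes a statement about a strict monoid structure on objects. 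We would try the first route and state clearly which reading is adopted.

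Finally, associativity and unit. Proposition~\ref{lem:properties_formation_product}(4) gives $(\mathfrak{H}\circ\mathfrak{G})\circ\mathfrak{F}=\mathfrak{H}\circ(\mathfrak{G}\circ\mathfrak{F})$; this can be cross-checked via (3), since $G^{(\mathfrak{G}\circ\mathfrak{F})}=(G^{\mathfrak{F}})^{\mathfrak{G}}$. Part (5) gives $\{1\}\circ\mathfrak{F}=\mathfrak{F}=\mathfrak{F}\circ\{1\}$. We take the associator and unitors to be identity morphisms. They are natural because all squares in a poset commute, and the coherence axioms hold for the same reason. This yields a (strict) monoidal category.
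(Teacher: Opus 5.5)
There is a genuine gap, and it is exactly where you located it: monotonicity of $\circ$ in the right variable. Neither your ``first route'' (using part (3) with $G^{\mathfrak{F}}$ and $\mathfrak{G}'$) nor any other argument can close it, because the claim is false.

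Here is a counterexample. Let $\mathfrak{H}$ be the class of finite groups with no central chief factor of order $3$; by Jordan--H\"older this does not depend on the chosen chief series.
\begin{enumerate}
\item[(a)] $\mathfrak{H}$ is a formation. Chief factors of $G/N$ are chief factors of $G$ with the same action. If $N_1\cap N_2=1$, take a chief series of $G$ through $N_1$. The factors above $N_1$ are chief factors of $G/N_1$. Those below $N_1$ are $G$-isomorphic, via $N_1\cong N_1N_2/N_2$, to chief factors of $G/N_2$.
\item[(b)] $S_3\in\mathfrak{H}$, since its chief factor $A_3$ is inverted by transpositions. But $C_3\notin\mathfrak{H}$.
\end{enumerate}
Now take $\{1\}\subseteq\classA$. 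Then $S_3\in\mathfrak{H}=\mathfrak{H}\circ\{1\}$. On the other hand $S_3^{\classA}=A_3\cong C_3\notin\mathfrak{H}$, so $S_3\notin\mathfrak{H}\circ\classA$. Hence $\mathfrak{H}\circ\{1\}\not\subseteq\mathfrak{H}\circ\classA$, and $\mathfrak{H}\circ-$ is not even a functor $\mathrm{FOR}\to\mathrm{FOR}$ when morphisms are inclusions.

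So the theorem is false as stated, with $\mathrm{FOR}$ a full subcategory of $\mathrm{CG}$. The paper's own functoriality argument fails at the same point. It passes from $G^{\mathfrak{G}_2}\le G^{\mathfrak{G}_1}\in\mathfrak{F}_1$ to $G^{\mathfrak{G}_2}\in\mathfrak{F}_1$, which is precisely the inference you correctly refused to make.

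What is provable are modified statements, and you should state one of them as the theorem rather than leave it as a contingency.
\begin{enumerate}
\item[(i)] Restrict to $\ops_n$-closed formations. If the left factor $\mathfrak{G}$ is closed under normal subgroups, right monotonicity holds. Indeed $G^{\mathfrak{F}'}$ is normal in $G$ and contained in $G^{\mathfrak{F}}$, hence normal in $G^{\mathfrak{F}}$. So $G^{\mathfrak{F}}\in\mathfrak{G}$ gives $G^{\mathfrak{F}'}\in\mathfrak{G}\subseteq\mathfrak{G}'$.

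You must also check that $\circ$ preserves $\ops_n$-closed formations. By Proposition~\ref{lem:properties_formation_product}(1), $\mathfrak{G}\circ\mathfrak{F}=\mathfrak{G}\mathfrak{F}$ for $\ops_n$-closed $\mathfrak{G}$. Moreover $\mathfrak{G}\mathfrak{F}$ is $\ops_n$-closed when $\mathfrak{G}$ and $\mathfrak{F}$ are: given $H$ subnormal in $G$ and $N\trianglelefteq G$ with $N\in\mathfrak{G}$, $G/N\in\mathfrak{F}$, the subgroup $H\cap N$ is subnormal in $N$. Also $H/(H\cap N)\cong HN/N$ is subnormal in $G/N$. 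The unit $\{1\}$ is $\ops_n$-closed as well.
\item[(ii)] Keep only identity morphisms. This gives a strict monoidal category, i.e.\ the monoid of formations under $\circ$.
\end{enumerate}
Your treatment of associativity, the unit, and coherence (automatic in a poset) is correct in either version.
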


\begin{proof}
We have established that $\mathrm{FOR}$ is a category and $\circ$ maps pairs of formations to a formation. The associativity of $\circ$ is given by Lemma \ref{lem:properties_formation_product}(2).
The unit object is $\mathfrak{I} = \{1\}$. For any formation $\mathfrak{F}$:
\begin{enumerate}
\item Left unitor: $\mathfrak{I}\circ \mathfrak{F} = \{G \mid G^{\mathfrak{F}} \in \mathfrak{I} \}$. Note that $G^{\mathfrak{F}} \in \mathfrak{I}$ means $G \in \mathfrak{F}$. Thus $\mathfrak{I} \circ \mathfrak{F} = \mathfrak{F}$.

\item Right unitor: $\mathfrak{F} \circ \mathfrak{I} = \{ G \mid G^{\mathfrak{I}} \in \mathfrak{F} \}$. Since $G^{\mathfrak{I}} = G$, we have $G \in \mathfrak{F}$. Thus $\mathfrak{F} \circ I = \mathfrak{F}$.
\end{enumerate}

The functoriality of $\circ$ follows from the fact that if $\mathfrak{F}_1 \subseteq \mathfrak{F}_2$ and $\mathfrak{G}_1 \subseteq \mathfrak{G}_2$, then $\mathfrak{F}_1 \circ \mathfrak{G}_1 \subseteq \mathfrak{F}_2 \circ \mathfrak{G}_2$.
Specifically, if $G^{\mathfrak{G}_1} \in \mathfrak{F}_1$, then $G^{\mathfrak{G}_2} \subseteq G^{\mathfrak{G}_1}$ (since $\mathfrak{G}_1 \subseteq \mathfrak{G}_2$), so $G^{\mathfrak{G}_2} \in \mathfrak{F}_1 \subseteq \mathfrak{F}_2$. This shows $(\mathfrak{F}_1 \circ \mathfrak{G}_1) \subseteq (\mathfrak{F}_2 \circ \mathfrak{G}_2)$ if $\mathfrak{G}_1 \subseteq \mathfrak{G}_2$ and $\mathfrak{F}_1 \subseteq \mathfrak{F}_2$.
\end{proof}

Seeing $\mathrm{FOR}$ as monoidal provides a natural framework for analyzing how formations interact and combine. This opens avenues for importing categorical techniques, such as internal hom-objects or monoidal functors, into the theory of finite group formations. As a concrete illustration, the product of the formations of abelian and nilpotent groups produces a new formation that captures extensions of nilpotent groups by abelian normal subgroups. These examples illustrate how formations gain additional structure when studied categorically.


This concludes our formalization of the universe of group classes. However, to analyze these classes, we must understand the tools that act upon them. In the following section, we shift our perspective from the objects to the operations themselves. We will formalize the collection of closure operators as a category, \textbf{CL}, and establish its structure as a complete lattice. This is not merely a categorical exercise; it provides the essential machinery---namely, the well-definedness of the join operation---that will be crucial for constructing and comparing the operators for Formations and Fitting classes in our subsequent topological analysis.

\section{A Categorical perspective on closure operators}

At this point, we move from studying individual closure operators to examining them collectively as a category. By treating them as objects of a category, we can apply categorical reasoning to study 
their algebraic structure, relationships, and universal properties.

\textbf{The category $\mathrm{CL}$  of closure operators.}
Fix a universe $\mathcal{S}$, such as $\classG$ or $\classE$.

\begin{definition}
The \emph{category of closure operators}, denoted $\mathrm{CL}$, is defined as follows:
\begin{enumerate}
\item Objects: closure operators $\opec:\mathcal{S} \to \mathcal{S}$.
\item Morphisms: for $\opec_1, \opec_2 \in \mathrm{Obj}(\mathrm{CL})$, there exists a unique morphism $\opec_1 \to \opec_2$ if and only if $\opec_1(\mathfrak{X}) \subseteq \opec_2(\mathfrak{X})$ for all classes $\mathfrak{X} \subseteq \mathcal{S}$.
    We denote this by $\opec_1 \preceq \opec_2$.
\end{enumerate}
\end{definition}

Thus $\mathrm{CL}$ is a posetal category, and its morphisms reflect the relative strength of closure operators.

\begin{proposition}
The category $\mathrm{CL}$ forms a complete lattice:
\begin{enumerate}
    \item The identity operator $\mathrm{Id}$ is the least element.
    \item The constant operator $C_{\top}$ (which maps $\varnothing \mapsto \varnothing$ and any nonempty class to $\mathcal{S}$) is the greatest element.
    \item Arbitrary meets and joins of families of closure operators exist, defined pointwise.
\end{enumerate}
\end{proposition}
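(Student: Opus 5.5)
The plan is to reduce the whole statement to the existence of arbitrary meets computed pointwise, and then get joins by the standard order-theoretic argument: a poset in which every subset has an infimum is a complete lattice. Throughout, the order on $\mathrm{CL}$ is $\preceq$. Since $\mathrm{CL}$ is posetal, limits and colimits are exactly meets and joins, as recalled in Section 2.

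First I will dispose of the extremal elements. The operator $\opid$ is trivially a closure operator. Extensivity of any $\opec$ gives $\mathfrak{X}=\opid\mathfrak{X}\subseteq\opec\mathfrak{X}$, so $\opid\preceq\opec$. For $C_\top$, the four axioms are checked directly: $C_\top\varnothing=\varnothing$; $\mathfrak{X}\subseteq\mathcal{S}$; monotonicity holds because a nonempty class only has nonempty superclasses; and $C_\top\mathcal{S}=\mathcal{S}$ gives idempotency. Moreover $\opec\mathfrak{X}\subseteq\mathcal{S}=C_\top\mathfrak{X}$ for $\mathfrak{X}\neq\varnothing$, while $\opec\varnothing=\varnothing=C_\top\varnothing$. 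Hence $\opec\preceq C_\top$ for every $\opec$.

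Next, given a family $(\opec_i)_{i\in I}$, I define $(\bigwedge_i\opec_i)\mathfrak{X}=\bigcap_i\opec_i\mathfrak{X}$, with the empty family giving $C_\top$. Preservation of $\varnothing$, extensivity and monotonicity are immediate. For idempotency, write $\opec=\bigwedge_i\opec_i$. Then $\opec\mathfrak{X}\subseteq\opec_j\mathfrak{X}$, so monotonicity of $\opec_j$ gives
\[
\opec_j(\opec\mathfrak{X})\subseteq\opec_j\opec_j\mathfrak{X}=\opec_j\mathfrak{X}.
\]
Intersecting over $j$ yields $\opec\opec\mathfrak{X}\subseteq\opec\mathfrak{X}$, and extensivity gives the reverse inclusion. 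It is then clear that $\opec$ is the greatest lower bound, because $\opef\preceq\opec_i$ for all $i$ exactly when $\opef\mathfrak{X}\subseteq\bigcap_i\opec_i\mathfrak{X}$ for all $\mathfrak{X}$. Here $\opef$ denotes an arbitrary closure operator; in the written proof I will name it with a letter the paper already defines.

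For joins I set $\bigvee_i\opec_i=\bigwedge\{\opec\in\mathrm{CL}\mid \opec_i\preceq\opec\ \forall i\}$. This family is nonempty because it contains $C_\top$, and its meet is the least upper bound by the previous paragraph. Concretely, $(\bigvee_i\opec_i)\mathfrak{X}$ is the smallest class containing $\mathfrak{X}$ that is $\opec_i$-closed for every $i$. This is an intersection of such classes, nonempty because $\mathcal{S}$ is one when $\mathfrak{X}\neq\varnothing$, and equal to $\varnothing$ when $\mathfrak{X}=\varnothing$. To justify this description I will check two things: the assignment is a closure operator, and it lies below every upper bound, since an upper bound $\opec$ sends $\mathfrak{X}$ to an $\opec_i$-closed class, because $\opec_i\opec\mathfrak{X}\subseteq\opec\opec\mathfrak{X}=\opec\mathfrak{X}$.

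The main obstacle is item (3)'s phrase ``defined pointwise'' for joins. The naive pointwise union $\mathfrak{X}\mapsto\bigcup_i\opec_i\mathfrak{X}$ is extensive and monotone but generally \emph{not} idempotent. For example, $\ops\mathfrak{X}\cup\opq\mathfrak{X}$ need not be closed under subgroups of quotients. So I will state explicitly that meets are pointwise intersections, while joins are computed pointwise only in the sense above: the value at $\mathfrak{X}$ is the $\{\opec_i\}$-closed hull of $\mathfrak{X}$. Equivalently, it is obtained by iterating the pointwise union, transfinitely if needed, until it stabilizes. This is exactly the well-defined join operation needed later for combining $\opq$ with $\opr_0$ and $\ops_n$ with $\opn_0$.
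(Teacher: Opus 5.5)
Your proposal is correct and follows the paper's route: extensivity for $\mathrm{Id}$, a direct comparison for $C_\top$, and pointwise intersection with the same monotonicity-plus-idempotency argument for meets. For joins, you construct the least upper bound as the meet of all upper bounds (nonempty because of $C_\top$), whereas the paper only attributes its existence to ``the lattice theory of closure operators.'' Your caveat that joins are not pointwise unions is right and agrees with the paper's own proof, which also takes the join to be the generated closure, so item (3) holds literally only for meets. In your concrete hull description, add the one-line check that the hull is itself an upper bound: $\opec_i\mathfrak{X}$ is contained in every $\opec_i$-closed class containing $\mathfrak{X}$.
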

\begin{proof}
For any closure operator $\opec$, extensivity implies $\mathrm{Id}(X) \subseteq \opec(X)$ and therefore $\mathrm{Id} \le \opec$ for any operator $\opec$. Consequently, the identity operator $\mathrm{Id}$ is the least element of the lattice.

The codomain of $\opec$ is a subset of group classes in $\mathcal{S}$. This implies that $\opec(X) \subseteq \mathrm{C}_{\top}(\mathfrak{X})$ for all non-empty classes $\mathfrak{X}$. The condition also holds for $\emptyset$. Thus, $\opec \le \mathrm{C}_{\top}$ for any operator $\opec$. Consequently, the constant operator $\mathrm{C}_{\top}$ is the greatest element of the lattice.

Let $\{\opec_i\}$ be an arbitrary family of closure operators. We define their meet $\opec_{\mathrm{meet}}$, pointwise as $\opec_{\mathrm{meet}}(\mathfrak{X}) = \bigcap \opec_i(\mathfrak{X})$. We must verify that $\opec_{\mathrm{meet}}$ is a closure operator:
\begin{enumerate}

\item \textbf{Extensivity:} $\mathfrak{X} \subseteq \opec_i(\mathfrak{X})$ for all $i$, so $\mathfrak{X} \subseteq \bigcap \opec_i(\mathfrak{X}) = \opec_{\mathrm{meet}}(\mathfrak{X})$.
            
\item \textbf{Monotonicity:} If $\mathfrak{X} \subseteq \mathfrak{Y}$, then $\opec_i(\mathfrak{X}) \subseteq \opec_i(\mathfrak{Y})$ for all $i$. Therefore, $\bigcap \opec_i(\mathfrak{X}) \subseteq \bigcap \opec_i(\mathfrak{Y})$, which implies $\opec_{\mathrm{meet}}(\mathfrak{X}) \subseteq \opec_{\mathrm{meet}}(\mathfrak{Y})$.
            
\item \textbf{Idempotency:} $\opec_{\mathrm{meet}}(\opec_{\mathrm{meet}}(\mathfrak{X})) = \bigcap_i \opec_i(\bigcap_j \opec_j(\mathfrak{X}))$. Since $\bigcap_j \opec_j(\mathfrak{X}) \subseteq \opec_i(\mathfrak{X})$ for all $i$, by monotonicity $\opec_i(\bigcap_j \opec_j(\mathfrak{X})) \subseteq \opec_i(\opec_i(\mathfrak{X})) = \opec_i(\mathfrak{X})$. Therefore, $\bigcap_i \opec_i(\bigcap_j \opec_j(\mathfrak{X})) \subseteq \bigcap_i \opec_i(\mathfrak{X}) = \opec_{\mathrm{meet}}(\mathfrak{X})$. The reverse inclusion follows from extensivity. Thus, $\opec_{\mathrm{meet}}$ is idempotent.
\end{enumerate}

The join, $\opec_{\mathrm{join}} = \bigvee \opec_i$, is defined as the closure operator generated by the union. That is, $\opec_{\mathrm{join}}$ is the smallest closure operator $\opec$ such that $\opec_i \le \opec$ for all $i$. The existence of such an operator is guaranteed in the lattice theory of closure operators.

Since $\mathrm{CL}$ has a least element, a greatest element, and admits all arbitrary meets and joins, it forms a complete lattice.
\end{proof}

\begin{remark}
The completeness of \textbf{CL} is the theoretical cornerstone that guarantees the validity of many standard algebraic constructions. For example, the operator $\operatorname{v} = \opq \vee \opr_0$ that defines a Formation is the join of two distinct operators. The existence and uniqueness of $\operatorname{v}$ is a direct consequence of this completeness. This structure provides the formal justification for the operations we will employ in our main application in Section 5.
\end{remark}
The categorical treatment of closure operators is well developed, see for instance \cite{DikranjanTholen2014,JanelidzeSobral2025}, where closure spaces and dual operators are studied in broad generality. Our approach adapts these ideas to the setting of group classes.

\textbf{On composition of closure operators.} Given two closure operators $\opec_1, \opec_2$, their functional composition $\opec_2 \circ \opec_1$ need not be idempotent, and thus may fail to be a closure operator. This occurs, for example, with $\ops$ and $\opq$: in general $\ops \opq \neq \opq \ops$, and neither is guaranteed to be idempotent.

In practice, group theorists often consider the \emph{closure generated} by a family of operations, i.e.\ the smallest closure operator containing all of them. 

This approach overcomes the limitations of composition and still allows one to combine closure properties effectively. (e.g.\ Formations are defined as classes closed under $\opq$, and $\opr_0$).

The Hasse Diagram in Figure 1 indicates some morphisms in the category $\mathrm{CL}$. The adjacency in the diagram is a direct consequence of Lemmas 1.17 and 1.18 in \cite[Chapter II]{DoerkHawkes90}.

\begin{figure}[ht]
\begin{center}
\begin{tikzpicture}[>=stealth, node distance=8mm]
	\tikzset{v/.style={draw=none, inner sep=1pt}}
	
	\node[v] (id)    at (0,-1) {$\opid$};
	
	\node[v] (Sn)    at (-4,0) {$\ops_n$};
	\node[v] (Ep)    at (-2,0) {$\ope_p$};
	\node[v] (D0)    at ( 0,0) {$\opd_0$};
	
	\node[v] (S)     at (-5,1) {$\ops$};
	\node[v] (EpS)   at (-4,1) {$\ope_p\ops$};
	\node[v] (EpN0)  at (-2.5,1) {$\ope_p\opn_0$};
	\node[v] (N0)    at (-1,1) {$\opn_0$};
	
	\node[v] (D0S)   at ( 0,2) {$\opd_0\ops$};
	\node[v] (R0)    at ( 1.5,2) {$\opr_0$};
	\node[v] (Q)     at ( 3,2) {$\opq$};
	\node[v] (D0Eph) at ( 4.5,2) {$\opd_0\ope_\Phi$};
	\node[v] (Eph)   at ( 6,2) {$\ope_\Phi$};
	
	\node[v] (SEp)   at (-3.5,2) {$\ops\ope_p$};
	\node[v] (N0Ep)  at (-2,2) {$\opn_0\ope_p$};
	
	\node[v] (R0Q)   at ( 1.5,4) {$\opr_0\opq$};
	\node[v] (QEph)  at ( 3,4) {$\opq\ope_\Phi$};
	\node[v] (EphD0) at ( 4.5,4) {$\ope_\Phi\opd_0$};
	
	\node[v] (SD0)   at ( -2.5,4) {$\ops\opd_0$};
	\node[v] (QR0)   at ( 1.5,5) {$\opq\opr_0$};
	\node[v] (EphQ)  at ( 5,5) {$\ope_\Phi\opq$};
	
	\draw[thick, -] (id) -- (Ep) -- (EpN0) -- (N0Ep) -- (N0) (D0S) -- (D0) -- (D0Eph) -- (EphD0) -- (Eph) (D0) -- (R0) -- (R0Q) -- (QR0) (D0S) -- (SD0) -- (R0) (id) -- (Sn) -- (S) -- (SEp) -- (EpS) -- (Ep) (S) -- (SD0) (QR0) -- (Q) -- (QEph) -- (EphQ) -- (Eph) -- (id) -- (D0);
\end{tikzpicture}
\caption{Graph of some morphisms in the category $\mathrm{CL}$}
\end{center}
\label{fig1}
\end{figure}

\textbf{Adjunctions arising from closure operators.} A closure operator naturally determines a reflection of the category of group classes into the subcategory of closed classes. This categorical perspective clarifies its universal property.

\begin{proposition}
Let $\opec$ be a closure operator on $\mathcal{S}$. Define the functor
\[A_{\opec} : \mathrm{CG} \longrightarrow \mathrm{CCG}, \qquad \mathfrak{X} \mapsto \opec\mathfrak{X},\]
where $\mathrm{CCG}$ denotes the full subcategory of $\mathrm{CG}$ consisting of $\opec$-closed classes. Then $A_{\opec}$ is left adjoint to the inclusion functor $i : \mathrm{CCG} \hookrightarrow \mathrm{CG}$.
\end{proposition}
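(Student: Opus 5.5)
Since $\mathrm{CG}$ and $\mathrm{CCG}$ are both posetal categories, I will prove the adjunction through the Galois-connection criterion. In a posetal setting all functoriality, naturality and triangle identities hold automatically, because each hom-set has at most one element. So it suffices to show two things: that $A_{\opec}$ is a well-defined monotone map into $\mathrm{CCG}$, and that for every class $\mathfrak{X}$ in $\mathrm{CG}$ and every $\opec$-closed class $\mathfrak{Y}$,
\[
\opec\mathfrak{X} \subseteq \mathfrak{Y} \iff \mathfrak{X} \subseteq i(\mathfrak{Y}) = \mathfrak{Y}.
\]
This bijection of (at most singleton) hom-sets, $\mathrm{Hom}_{\mathrm{CCG}}(A_{\opec}\mathfrak{X},\mathfrak{Y}) \cong \mathrm{Hom}_{\mathrm{CG}}(\mathfrak{X}, i\mathfrak{Y})$, is then automatically natural in both variables.

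\textbf{Well-definedness.} Idempotency gives $\opec(\opec\mathfrak{X}) = \opec\mathfrak{X}$, so $\opec\mathfrak{X}$ is $\opec$-closed and hence an object of $\mathrm{CCG}$. Monotonicity of $\opec$ shows that $\mathfrak{X}\subseteq\mathfrak{X}'$ implies $\opec\mathfrak{X}\subseteq\opec\mathfrak{X}'$. Thus the unique morphism $\mathfrak{X}\to\mathfrak{X}'$ is sent to a morphism, and $A_{\opec}$ is a functor. The inclusion $i$ is a functor trivially, since $\mathrm{CCG}$ is a full subcategory.

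\textbf{The equivalence.} For ($\Rightarrow$), extensivity gives $\mathfrak{X}\subseteq\opec\mathfrak{X}\subseteq\mathfrak{Y}$. For ($\Leftarrow$), monotonicity together with closedness of $\mathfrak{Y}$ gives $\opec\mathfrak{X}\subseteq\opec\mathfrak{Y}=\mathfrak{Y}$. With the criterion in hand, I can make the data explicit. The unit $\eta_{\mathfrak{X}}\colon \mathfrak{X}\to i A_{\opec}\mathfrak{X}$ is the inclusion $\mathfrak{X}\subseteq\opec\mathfrak{X}$ provided by extensivity. The counit $\varepsilon_{\mathfrak{Y}}\colon A_{\opec} i\mathfrak{Y}\to\mathfrak{Y}$ is the identity, since $\opec\mathfrak{Y}=\mathfrak{Y}$. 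Because the counit is an isomorphism, this exhibits $\mathrm{CCG}$ as a reflective subcategory.

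\textbf{Main point requiring care.} There is no real obstacle here. The one thing to state explicitly is the justification for reducing an adjunction between posetal categories to the Galois-connection condition: any two parallel morphisms are equal, so naturality squares and triangle identities commute automatically. I also need to confirm that $\mathrm{CCG}$ is nonempty and well formed. This holds because $\emptyset$ and $\mathcal{S}$ are always $\opec$-closed: the first by axiom (1), the second by extensivity.
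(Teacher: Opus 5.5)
Your proof is correct and takes the same approach as the paper. The paper's sketch also reduces the adjunction to the hom-set equivalence $\opec\mathfrak{X}\subseteq\mathfrak{Y}\iff\mathfrak{X}\subseteq\mathfrak{Y}$ for $\opec$-closed $\mathfrak{Y}$; you add the details it leaves implicit: idempotency so that $\opec\mathfrak{X}$ lies in $\mathrm{CCG}$, monotonicity for functoriality, extensivity and closedness for the two directions, and the explicit unit and counit.
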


\begin{proof}[Proof sketch]
For any class of groups $\mathfrak{X}$ and any $\opec$-closed class $\mathfrak{Y}$, we have $\opec(\mathfrak{X}) \subseteq \mathfrak{Y}$ if and only if $\mathfrak{X} \subseteq \mathfrak{Y}$.
This bijective correspondence of morphisms is precisely the adjunction $A_{\opec} \dashv i$.
\end{proof}

\begin{remark}
The statement $A_c \dashv i$ provides the profound why behind the topological results of the next section. A fundamental theorem of category theory states that all left adjoint functors preserve colimits. In the posetal context of \textbf{CG}, colimits are unions. Therefore, the fact that an operator $\opec$ acts as a left adjoint forces it to preserve unions, which is precisely the condition of \textbf{additivity}: $\opec(\mathfrak{X} \cup \mathfrak{Y}) = \opec\mathfrak{X} \cup \opec\mathfrak{Y}$. The additivity required by Kuratowski's theorem is not an arbitrary condition, but a necessary consequence of the universal role these operators play as reflections.
\end{remark}

\begin{example} 
For the operator $\ops$, the associated adjunction says: the smallest subgroup-closed class containing $\mathfrak{X}$ is $\ops\mathfrak{X}$, and for $\opq$, the adjunction asserts: the smallest quotient-closed class containing $\mathfrak{X}$ is $\opq\mathfrak{X}$.

\end{example}

These adjunctions exemplify how closure operators act as reflections, projecting arbitrary classes into the subcategory of closed classes. From this perspective, closure operators reveal their universal role within group theory in a clear and elegant way.


Having established that the category of closure operators \textbf{CL} forms a complete lattice, and having clarified their universal role as reflections, we are now fully equipped for our main investigation. The completeness of \textbf{CL} guarantees that we can meaningfully combine operators using the join ($\vee$), and the theory of adjunctions provides the deep reason for the property of additivity. We will now apply this robust algebraic and categorical framework to analyze the most profound impact of these operators: the topological structures they induce on the universe of groups.
 
\section{Global Topological Invariance versus Local Algebraic Difference}
\label{sec:topologies}

We now investigate how closure operators give rise to topological structures. Following Kuratowski’s classical theorem, additive closure operators induce topologies. When applied to classes of groups, these operators generate Alexandrov topologies whose global deformation properties reveal striking similarities.

\begin{definition}
A closure operator $\opec$ is said to be \emph{additive} if $\opec(\mathfrak{X} \cup \mathfrak{Y}) \subseteq \opec(\mathfrak{X}) \cup \opec(\mathfrak{Y})$,  for all classes  $\mathfrak{X},  \mathfrak{Y}$.
If $\opec$ is additive, then it coincides with a Kuratowski closure operator, and therefore determines a topology.
\end{definition}

\begin{remark}
Due to the extensivity of the closure operators, for all group classes  $\mathfrak{X}$ and $\mathfrak{Y}$ holds $\opec(\mathfrak{X}) \cup \opec(\mathfrak{Y}) \subseteq \opec(\mathfrak{X} \cup \mathfrak{Y})$. Then it follows that a closure operator $\opec$ is \emph{additive} if $\opec(\mathfrak{X} \cup \mathfrak{Y}) = \opec(\mathfrak{X}) \cup \opec(\mathfrak{Y})$,  for all classes  $\mathfrak{X},  \mathfrak{Y}$.  
\end{remark}

\begin{lemma}
The following closure operators on classes of groups are additive: $\opid, \ops, \opq, \ops_n, \ope_\Phi$ and $ \ope_p$.  By contrast, $\opr_0$, $\opn_0$ and $\opd_0$ are in general not additive.
\end{lemma}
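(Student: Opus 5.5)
The additive operators all have the same shape, and I would exploit that. Each of $\opid, \ops, \opq, \ops_n, \ope_\Phi, \ope_p$ can be written as
\[\opec\mathfrak{X} = \{G \mid G\, \rho\, H \text{ for some } H \in \mathfrak{X}\}\]
for a binary relation $\rho$ on groups:
\begin{enumerate}
\item equality up to isomorphism for $\opid$;
\item ``is isomorphic to a subgroup of'' for $\ops$;
\item ``is an epimorphic image of'' for $\opq$;
\item ``is isomorphic to a subnormal subgroup of'' for $\ops_n$;
\item ``has a normal $N\le\Phi(G)$ with $G/N\cong H$'' for $\ope_\Phi$;
\item ``has a normal $K\le O_p(G)$ with $G/K\cong H$'' for $\ope_p$.
\end{enumerate}
The key observation is then a one-line lemma: any operator of this form satisfies $\opec(\bigcup_i\mathfrak{X}_i)=\bigcup_i\opec\mathfrak{X}_i$. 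The reason is that membership of $G$ is witnessed by a \emph{single} group $H$, and that $H$ lies in some $\mathfrak{X}_i$. By the preceding remark, only the inclusion $\opec(\mathfrak{X}\cup\mathfrak{Y})\subseteq\opec\mathfrak{X}\cup\opec\mathfrak{Y}$ needs checking, and this is exactly what the lemma gives. I would state it once and then verify that each listed operator matches its relation, reading directly off the definitions in Section 2. No transitivity or group theory is needed for additivity itself. Idempotency of these operators is taken as known, as in the paper.

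For the negative part I would give explicit counterexamples with $\mathfrak{X}$ and $\mathfrak{Y}$ singletons (closed under isomorphism, plus the trivial group). The guiding idea is that $\opr_0,\opn_0,\opd_0$ each use \emph{several} witnesses at once, so mixing witnesses from $\mathfrak{X}$ and $\mathfrak{Y}$ produces new groups.

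\textbf{$\opd_0$.} Take $\mathfrak{X}=(C_2)$ and $\mathfrak{Y}=(C_3)$. Then $C_2\times C_3\cong C_6$ lies in $\opd_0(\mathfrak{X}\cup\mathfrak{Y})$. But $\opd_0\mathfrak{X}$ contains only elementary abelian $2$-groups and $\opd_0\mathfrak{Y}$ only elementary abelian $3$-groups.

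\textbf{$\opr_0$.} The same classes work. $C_6$ has normal subgroups $N_1\cong C_3$ and $N_2\cong C_2$ with $N_1\cap N_2=1$, and $C_6/N_1\cong C_2$, $C_6/N_2\cong C_3$. Meanwhile every group in $\opr_0\mathfrak{X}$ embeds in a direct power of $C_2$, so it is a $2$-group, and similarly every group in $\opr_0\mathfrak{Y}$ is a $3$-group. Hence $C_6$ lies in neither.

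\textbf{$\opn_0$.} The same classes again work. $C_6$ is generated by its normal subgroups $C_2$ and $C_3$. Groups in $\opn_0\mathfrak{X}$ are generated by subnormal $2$-subgroups, and since the join of subnormal $p$-subgroups lies in $O_p(G)$, such a group is a $2$-group; so $C_6\notin\opn_0\mathfrak{X}$, and likewise $C_6\notin\opn_0\mathfrak{Y}$.

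The only step needing any care is this last one, the claim that a group generated by subnormal $p$-subgroups is a $p$-group. I would cite the standard fact that every subnormal $p$-subgroup lies in $O_p(G)$ \cite{DoerkHawkes90}. Alternatively I would avoid it with a cruder argument: every generator $K_i$ has order a power of $2$ and lies in a normal $2$-subgroup, and the product of normal $2$-subgroups is a $2$-group.

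Finally, since the statement says ``in general'', a single counterexample for each operator suffices. There is no real obstacle in the argument beyond stating the witness-relation lemma cleanly.
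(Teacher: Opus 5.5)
Your proposal is correct and is essentially the paper's proof. The additive cases use the same single-witness argument: you state it once as a general lemma, which even gives preservation of arbitrary unions, rather than repeating it for each operator. The non-additive cases use the same $C_p\times C_q$ counterexamples.

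Your $O_p(G)$ argument for $\opn_0$ is more accurate than the paper's assertion that $\opn_0\{C_p\}$ is the class of elementary abelian $p$-groups. That assertion fails for the subnormal definition used here: $D_8$ is generated by two subnormal subgroups of order $2$. The paper's conclusion $C_p\times C_q\notin\opn_0\mathfrak{X}\cup\opn_0\mathfrak{Y}$ nevertheless still holds.
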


\begin{proof}
Let $\mathfrak{X}$ and $\mathfrak{Y}$ be any classes of groups.

(1) \textbf{The operator $\ops$.} Let $G \in \ops(\mathfrak{X} \cup \mathfrak{Y})$. Then $G \le H$ for some $H \in \mathfrak{X} \cup \mathfrak{Y}$.
If $H \in \mathfrak{X}$, then $G \in \ops\mathfrak{X}$. If $H \in \mathfrak{Y}$, then $G \in \ops\mathfrak{Y}$. Thus, $G \in \ops\mathfrak{X} \cup \ops\mathfrak{Y}$.  

(2) \textbf{The operator $\opq$.} Let $G \in \opq(\mathfrak{X} \cup \mathfrak{Y})$. Then there exists $H \in \mathfrak{X} \cup \mathfrak{Y}$ and an epimorphism $\varphi: H \twoheadrightarrow G$. If $H \in \mathfrak{X}$, then $G \in \opq\mathfrak{X}$, and if $H \in \mathfrak{Y}$, then $G \in \opq\mathfrak{Y}$. Thus, $G \in \opq\mathfrak{X} \cup \opq\mathfrak{Y}$.  

(3) \textbf{The operator $\ops_n$.} Let $G \in \ops_n(\mathfrak{X} \cup \mathfrak{Y})$. Then $G \unlhd\unlhd K$ for some $K\in \mathfrak{X}$. If $K\in \mathfrak{X}$, then $G \in \ops_n\mathfrak{X}$ and if $K\in \mathfrak{Y}$, then $G \in \ops_n\mathfrak{Y}$. Therefore, $G \in \ops_n\mathfrak{X} \cup \ops_n\mathfrak{Y}$.  

(4) \textbf{The operator $\ope_\Phi$.} Let $G \in \ope_\Phi(\mathfrak{X} \cup \mathfrak{Y})$. Then $G/N \in \mathfrak{X} \cup \mathfrak{Y}$ for some $N \trianglelefteq G$ with $N \le \Phi(G)$. If $G/N \in \mathfrak{X}$, then $G \in \ope_\Phi\mathfrak{X}$ and if $G/N \in \mathfrak{Y}$, then $G \in \ope_\Phi\mathfrak{Y}$. Thus $G \in \ope_\Phi\mathfrak{X} \cup \ope_\Phi\mathfrak{Y}$. 

(5) The proof for $\ope_p$ follow similar as for $\ops$ and $\opq$. 

(6) \textbf{The operator $\opr_0$.} Let $p$ and $q$ be prime numbers with $p\neq q$ and 
let $\mathfrak{X} = \{1, C_p\}$, and $\mathfrak{Y} = \{1, C_q\}$. Note that every elementary abelian $p$-group belongs to $\opr_0\mathfrak{X}$ and similarly any elementary abelian $q$-group is in $\opr_0\mathfrak{Y}$. It is clear that the group $C_q\notin \opr_0\mathfrak{X}$ and the group $C_p \notin \opr_0 \mathfrak{Y}$. If we define $G := C_p\times C_q$, then $G\notin \opr_0 \mathfrak{X}$ and $G\notin \opr_0\mathfrak{Y}$. Therefore, $G\notin (\opr_0\mathfrak{X} \cup \opr_0\mathfrak{Y})$.

On the other hand, $\mathfrak{X} \cup \mathfrak{Y} = \{1, C_p, C_q\}$, and the $(\mathfrak{X} \cup \mathfrak{Y})$-residual of $G$ is $1$. This implies that $G\in \opr_0(\mathfrak{X} \cup \mathfrak{Y})$ and we have that the operator $\opr_0$ is in general not additive.

(7) \textbf{The operator $\opn_0$.} Using the notation like in (5), let $\mathfrak{X} = \{C_p\}$, and $\mathfrak{Y} = \{C_q\}$. Then $\opn_0\mathfrak{X}$ and  $\opn_0\mathfrak{Y}$ are the class of elementary abelian $p$-groups and the class of elementary abelian $q$-groups, respectively. Let again $G := C_p\times C_q$, then it is clear that $G \in \opn_0(\mathfrak{X} \cup \mathfrak{Y})$. However, $\opn_0\mathfrak{X}\, \cup \opn_0\mathfrak{Y}$ contains only elementary abelian $r$-groups for $r\in \{p, q\}$. Thus, $G\notin (\opn_0\mathfrak{X}\, \cup \opn_0\mathfrak{Y})$ because it is neither an elementary abelian $p$-group nor an elementary abelian $q$-group. This proves that $\opn_0$ is not additive.

(8) \textbf{The operator $\opd_0$.} A similar counterexample can be constructed for $\opd_0$ using the same classes $\mathfrak{X}$ and $\mathfrak{Y} $ and the same group $G$ as in (6). 
\end{proof}

\textbf{The Alexandrov topologies on the universe of groups.} Given a preorder $\preceq$ on $\classG$, the associated Alexandrov topology is defined by taking the open sets to be the up-sets: $U \subseteq \classG$ is open if $x \in U$ and $x \preceq y$ imply $y \in U$. The corresponding closure operator is
\[\opecl_{\preceq}(\mathfrak{X}) = \{G\in \classG \mid \exists H\in \mathfrak{X}, G\preceq H \}.\]

In this setting:
\begin{enumerate}
\item If $\preceq$ is the subgroup relation, then $\opecl_{\preceq} = \ops$.
\item If $\preceq$ is the subnormal subgroup relation, then $\opecl_{\preceq} = \ops_n$.
\item If $\succeq $ is the quotient relation, then $\opecl_{\preceq} = \opq$. Here, a change in the order is necessary to achieve upper sets as open sets.
\end{enumerate}
Thus, the closure operators $\ops, \ops_n, \opq$ yield Alexandrov topologies on $\classG$ or $\classE$.

Additive closure operators naturally induce Alexandrov topologies, a connection that has found applications beyond group theory, for example, in algebraic geometry and logic 
\cite{Kimura2024,Lambert2024}.

\textbf{Path-connectedness and contractibility.} A key property of Alexandrov spaces is that directedness of the underlying preorder implies path-connectedness. In our context, the preorder relations induced by $\ops, \ops_n$, and $\opq$ are directed, hence yield path-connected spaces. In fact, beyond path-connectedness, these spaces turn out to be contractible.

It is well known that in finite Alexandrov spaces, path-connectedness coincides with order-connectedness, i.e., the existence of a finite fence of comparable points. See Proposition 1.2.4 in Barmak’s monograph \cite{Barmak}.

\begin{theorem}
Let $\mathcal{S}$ denote either $\classG$ or $\classE$. Then the spaces $(\mathcal{S}, \ops)$, $(\mathcal{S},\ops_n)$, and $(\mathcal{S}, \opq)$ are path-connected and contractible. Consequently, all their homotopy groups $\pi_n$ vanish for $n \geq 1$, and the three spaces are homotopically equivalent.
\end{theorem}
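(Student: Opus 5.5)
The plan is to use the fact that each of the three Alexandrov spaces has a point that is comparable to every other point. In the preorder $\preceq$ whose down-set closure is $\opecl_{\preceq}$, that point is the trivial group $1$, and this lets us write down an explicit contraction.

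First I identify the specialization preorders. For $\ops$ the closure of $\{H\}$ is $\ops\{H\}$, the set of subgroups of $H$. So $G\in\overline{\{H\}}$ if and only if $G\le H$. For $\ops_n$ the relation is $G\unlhd\unlhd H$. For $\opq$ it is ``$G$ is a quotient of $H$''. In all three cases the trivial group $1$ lies in the closure of every point, since $1\le H$, $1\unlhd\unlhd H$ and $H\twoheadrightarrow 1$. Equivalently, $1$ lies in every nonempty closed set. Dually, the only open set containing $1$ is the whole space $\mathcal{S}$: an open set is the complement of a closed set, and a closed set that avoids $1$ must be empty.

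Path-connectedness then follows directly. For any $H$, define $\gamma:[0,1]\to\mathcal{S}$ by $\gamma(t)=H$ for $t<1$ and $\gamma(1)=1$. To check continuity, take an open $U$. If $1\in U$ then $U=\mathcal{S}$ and $\gamma^{-1}(U)=[0,1]$. If $1\notin U$ then $\gamma^{-1}(U)$ is either $\emptyset$ or $[0,1)$, which is open. Every point is therefore joined by a path to $1$, so the space is path-connected. This also gives the two-step fence needed for order-connectedness in the sense of \cite{Barmak}, although we do not need that here.

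For contractibility I define $F:\mathcal{S}\times[0,1]\to\mathcal{S}$ by $F(G,t)=G$ for $t<1$ and $F(G,1)=1$, so that $F_0=\mathrm{id}$ and $F_1$ is constant. For continuity, take an open $U$. If $1\in U$ then $U=\mathcal{S}$ and $F^{-1}(U)$ is the whole product. If $1\notin U$ then $F^{-1}(U)=U\times[0,1)$, which is open. So $\mathcal{S}$ is contractible. Contractible spaces have all homotopy groups trivial, and any two contractible spaces are homotopy equivalent to a point and hence to each other. This gives the final clause of the theorem for all three topologies and for both $\mathcal{S}=\classG$ and $\mathcal{S}=\classE$.

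The main obstacle I expect is set-theoretic and definitional rather than topological. The points of $\mathcal{S}$ should be isomorphism classes of groups, so that $\mathcal{S}$ is a set and the preorder is a genuine partial order. One also has to confirm that the closed sets of the Kuratowski topology really are the $\ops$-, $\ops_n$- and $\opq$-closed classes. This uses the additivity from the preceding lemma together with the stated conventions on the empty class. Once those points are fixed, the homotopy argument is the short verification above.
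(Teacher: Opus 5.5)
Your proof is correct. For contractibility it is the paper's argument made explicit. The paper's sketch only says that every group is shrunk to $1$, and its later remark on $\ope_\Phi$ and $\ope_p$ names the same fact you isolate: the only open set containing $1$ is $\mathcal{S}$. Your homotopy $F(G,t)=G$ for $t<1$, $F(G,1)=1$, with $F^{-1}(U)=U\times[0,1)$ whenever $1\notin U$, is what that sketch leaves unwritten.

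The genuine difference is in path-connectedness. The paper joins $G_1$ to $G_2$ through the upper bound $G_1\times G_2$, using the path that equals $G_1\times G_2$ on $(0,1)$. That path is continuous because each $G_i$ is a subgroup, a normal subgroup and a quotient of $G_1\times G_2$, so every open up-set containing $G_i$ contains $G_1\times G_2$. You use the lower bound $1$ instead. Path-connectedness then needs no algebraic input beyond $1\preceq H$, and it becomes a special case of your contraction. Your route is more economical. The paper's route shows that upward directedness alone would already give path-connectedness, which is the role it assigns to the direct product in its interpretation, but the theorem does not need it.

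One correction to your final paragraph. Passing to isomorphism types does make $\classE$ a countable set on which the three relations are partial orders. For $\classG$, however, the isomorphism types still form a proper class, and the relations need not be antisymmetric: the free groups $F_2$ and $F_3$ embed in each other. Neither point affects your argument, which uses only that $1\preceq H$ for every $H$. The size issue is inherited from the statement itself and can be handled by fixing a universe or a bound on cardinalities.
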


\begin{proof}[Sketch of proof]
For any $G_1, G_2 \in \mathcal{S}$, the group $G_1 \times G_2$ serves as an intermediate point connecting them via the relevant closure relation. Defining a piecewise constant path $f:[0,1]\to \mathcal{S}$ with $f(0)=G_1$, $f(1)=G_2$, and $f(t)=G_1\times G_2$ for $0<t<1$ shows path-connectedness. Contractibility follows by constructing a homotopy shrinking every group continuously to the trivial group $1$, which is a universal basepoint in all three topologies.
\end{proof}

\textbf{Comparison of topologies.} Although homotopically equivalent, the topologies differ in strength. For instance, every $\ops$-closed class is also $\ops_n$-closed, but the converse fails; hence, the topology induced by $\ops$ is strictly finer than that induced by $\ops_n$. Analogous separations occur between $\ops$ and $\opq$. 

\textbf{Remarks on $\ope_{\Phi}$ and $\ope_p$.}
The arguments above rely on the fact that the only open set containing the trivial group $1$ is the entire space, which ensures continuity of the contraction. This property does not hold for the $\ope_{\Phi}$ and $\ope_p$ operators, where nontrivial open neighborhoods of $\{1\}$ exist. As a consequence, the contractibility of $(\mathcal{S}, \ope_{\Phi})$ and $(\mathcal{S}, \ope_p)$ remains open, requiring different techniques. We leave the case of $\ope_{\Phi}$ and $\ope_p$ as an open question for future work.

\subsection{Algebraic Interpretation of Topological Invariance}

The homotopy equivalence established before is not a mere topological curiosity, but a profound reflection of the underlying algebraic structure of $\mathfrak{S}$. It reveals a principle of global simplicity that persists despite significant local complexity.

This topological equivalence is anchored in two fundamental algebraic facts. Firstly, path-connectedness is a direct consequence of the \textbf{direct product}, which provides a universal bridge between any two groups. Secondly, contractibility follows from the universal role of the \textbf{trivial group}, which acts as a common root for the entire universe of groups. This shared behavior suggests that the class of \textbf{additive universally anchored operators} forms a family that generates a single trivial homotopy type.

This global invariance becomes even more striking when contrasted with the well-known local algebraic friction between the operators. It is a classical result that $\ops$ and $\opq$ do not commute. This non-commutativity is not a minor technicality; it is the source of rich algebraic theories, such as that of varieties, and it translates into a structural misalignment between their respective topologies. Nevertheless, our findings show that this local tension is completely resolved at a global scale. The network of algebraic relations in $\mathfrak{S}$ is so rich and densely interconnected that it smooths out local complexities into a uniform global shape.

\subsection{A Tale of Two Assymetries: The Structural Regularization in Formations}

The power of our topological framework is best illustrated by applying it to two of the most important dually defined structures in group theory: Formations and Fitting classes. This analysis reveals a fundamental topological asymmetry between them, which is directly explained by the properties of their join.

Let $\operatorname{v} = \opq \vee \opr_0$ be the Formation closure operator, defined as the join (supremum) of its constituent operators. It is a classical result that the Formation operator is additive. Furthermore, as a join of universally anchored operators, $\operatorname{v}$ is itself universally anchored. Consequently, the space $(\mathfrak{S}, \operatorname{v})$ induced by the Formation operator is also contractible. The structure of Formations is thus topologically simple and robust.

In stark contrast, let $\mathrm{Fit} = \ops_n \vee \opn_0$ be the Fitting class closure operator. This operator is a join of an additive operator ($\ops_n$) and a non-additive one ($\opn_0$). The non-additivity of the synthesis component dominates, rendering the join non-additive:
We use the established counterexample for $\opn_0$. Let $\mathfrak{X} = \{C_2\}$ and $\mathfrak{Y} = \{C_3\}$. The class $\mathrm{Fit}(\mathfrak{X}) \cup \mathrm{Fit}(\mathfrak{Y})$ consists of groups that are either 2-elementary abelian or 3-elementary abelian. The group $G = C_6$ does not belong to this class.

However, consider the class $\mathrm{Fit}(\mathfrak{X} \cup \mathfrak{Y})$. The group $G = C_6$ is the product of its normal subgroups $C_2 \in \mathfrak{X}$ and $C_3 \in \mathfrak{Y}$. By the definition of $\opn_0$, this implies $C_6 \in \opn_0(\mathfrak{X} \cup \mathfrak{Y})$. Since $\opn_0 \leq \mathrm{Fit}$ by the definition of the join, it follows that $C_6 \in \mathrm{Fit}(\mathfrak{X} \cup \mathfrak{Y})$.
Since $C_6 \in \mathrm{Fit}(\mathfrak{X} \cup \mathfrak{Y})$ but $C_6 \notin \mathrm{Fit}(\mathfrak{X}) \cup \mathrm{Fit}(\mathfrak{Y})$, the operator $\mathrm{Fit}$ is not additive. 

Consequently, the algebraic structure of Fitting classes cannot be described by a single well-behaved Kuratowski topology via our method. 
This exposes a fundamental asymmetry. The universe of groups, when structured by Formations, is topologically simple. When structured by their algebraic dual, Fitting classes, it is topologically opaque to this analysis, indicating a deeper, more complex global structure that would require different methods for analysis.

\section{Conclusion}





By combining categorical and topological tools, this paper has developed a unified framework for the study of group classes, formations, and closure operators. We established that the category of group classes, $\mathrm{CG}$, is both complete and cocomplete, while the category of formations, $\mathrm{FOR}$, becomes a monoidal category under the Gaschütz product. We also showed that additive closure operators such as $\ops$, $\ops_n$, and $\opq$ induce Alexandrov topologies on the universe of groups which, although distinct, are all path-connected, contractible, and therefore homotopically equivalent. Finally, we introduced the category $\mathrm{CL}$ and showed that it is a complete lattice, with adjunctions highlighting the universal role of closure operators.  

Taken together, these results demonstrate how categorical and topological methods provide a common perspective on algebraic constructions. They suggest that apparently different notions - subgroups, subnormal subgroups, and quotients - share a deeper invariant: the trivial homotopy type of the universe of groups.

These results highlight the usefulness of categorical and topological methods as unifying principles in finite group theory, continuing a tradition that goes back to the categorical foundations laid out in \cite{MacLane,Borceux}.

\section*{Future Directions}
Several natural questions emerge from this work. One challenge is to extend the homotopical analysis to the spaces associated with \(E_\Phi\) and \(E_p\). Their contractibility remains open and may require new techniques, possibly drawing from higher-dimensional algebraic topology of posets.  

Another promising direction is the enrichment of $\mathrm{FOR}$. Since it is monoidal, one may ask whether internal hom-objects, monads, or algebra structures can be developed within it, potentially uncovering new aspects of the internal logic of formations.  


Finally, while contractibility shows that all higher homotopy groups vanish, more refined invariants - such as homology of specific subposets - may still capture subtle structural features invisible from the global viewpoint.  

In short, the categorical and topological perspectives developed here not only clarify existing structures but also open avenues for further exploration, offering a richer conceptual landscape for the study of finite group theory.

 \section{Funding declaration}
 On behalf of all authors, the corresponding author states that we did not receive any specific grant from funding agencies in the public, commercial, or not-for-profit sectors.

\bibliographystyle{plain}

\end{document}